\def\RR{\mathbb R}
\def\NN{\mathbb N}
\def\uu{\mathcal U}
\def\vv{\mathcal V}
\def\uuu{\overline{\mathcal U}}
\def\vvv{\overline{\mathcal V}}
\def\eps{\varepsilon}
\def\aa{\mathcal A}
\newcommand{\set}[1]{\left\lbrace #1\right\rbrace}%set
\providecommand{\abs}[1]{\left\lvert#1\right\rvert}
\newcommand{\remove}[1]{ }
\newcommand{\qtq}[1]{\quad\text{#1}\quad}%\quad\text{and}\quad
\newtheorem{theorem}{Theorem}%[section]
\newtheorem{lemma}[theorem]{Lemma}
\newtheorem{corollary}[theorem]{Corollary}
\theoremstyle{definition}
\theoremstyle{remark}
\newtheorem{remark}[theorem]{Remark}
\begin{document}
\title[]{Hausdorff distance of univoque sets}
\author{Yi Cai}
\address{School of Mathematical Sciences, East China Normal University,
Shanghai 200241, People’s Republic of China}
\email{52170601013@stu.ecnu.edu.cn}

\author{Vilmos Komornik}
\address{College of Mathematics and Computational Science,
Shenzhen University,
Shenzhen 518060,
People’s Republic of China,
and
Département de mathématique,
Université de Strasbourg,
7 rue René Descartes,
67084 Strasbourg Cedex, France}
\email{komornik@math.unistra.fr}
\subjclass[2010]{Primary 11A63; Secondary 11K55, 54E40}

%11A63 Radix representation; digital problems
%11K55 Metric theory of other algorithms and expansions; measure and Hausdorff dimension
%54E40 Special maps on metric spaces

\keywords{Non-integer base expansions;  Hausdorff metric; unique $\beta$-expansion.}
\date{Version 2019-06-23}
%\dedicatory{}
\thanks{This work has been done during the author's stay in the
Department of Mathematics of the University of Strasbourg.
He thanks the members of the department for their hospitality.}

\begin{abstract}
Expansions in non-integer bases have been investigated abundantly since their introduction by Rényi.
It was discovered by Erdős et al. that the sets of numbers with a unique expansion have a much more complex structure than in the integer base case.
The present paper is devoted to the continuity properties of these maps with respect to the Hausdorff metric.
\end{abstract}
\maketitle

Expansions in non-integer bases have been investigated abundantly since the pioneering works of Rényi \cite{R1957} and Parry \cite{P1960}.
It was discovered by Erdős et al. \cite{EHJ1991, EJK1990} that the sets of numbers with a unique expansion have a much more complex structure than in the integer base case.
The size, the topology and the Hausdorff dimension of these sets, and their dependence on the base has been clarified by the work of many authors \cite{DK1995,
KLP2003,
KL1998,
KL2007,
GS2001,
DK2009,
DK2011,
KL2015,
KLLD2016,
KKL2017,
AK2018},
and leading to a number of  unexpected results.

The present paper is devoted to the continuity properties of these maps with respect to the Hausdorff metric.
The topological results that we will use are contained in the papers \cite{DK2009,
DKL2016} that can also serve as an introduction to many aspects of this theory.

Fix a positive integer $M$.
By a \emph{sequence} we mean an element $c=(c_i)_{i=1}^{\infty}$ of the set $\set{0,1,\ldots,M}^{\NN}$.
By defining the distance $\rho((c_i),(d_i))$ of two different sequences as $2^{-n}$ if $c_n\ne d_n$, and $c_i=d_i$ for all $i<n$, $\set{0,1,\ldots,M}^{\infty}$ becomes a metric space.

Given a real number $q>1$, a sequence is called an \emph{expansion} of a real number $x$ in \emph{base} $q$ over the \emph{alphabet} $\set{0,1,\ldots,M}$ if
\begin{equation*}
(c_i)_q:=\sum_{i=1}^{\infty}\frac{c_i}{q^i}=x.
\end{equation*}
We denote by $\uu_q$ the set of real numbers having a unique expansion, and by $\uu_q'$ the set of corresponding expansions (sequences).
We recall the elementary relation
\begin{equation*}
p<q\Longrightarrow\uu_p'\subset\uu_q'.
\end{equation*}
We are interested in the continuity properties of the maps  $q\mapsto\uu_q'$ and $q\mapsto\uu_q$ where $q$ runs over the interval $(1,\infty)$ and the distances of the  image sets  are taken in Hausdorff's sense, i.e., $d_H(A,B)$ is the the infimum of the numbers $r>0$ such that
\begin{equation*}
A\subset\cup_{x\in B}B_r(x)\qtq{and}B\subset\cup_{x\in A}B_r(x);
\end{equation*}
here $B_r(x)$ denotes the open ball of radius $r$, centered at $x$.
Let us observe that
\begin{equation}\label{1}
d_H(\overline{A},\overline{B})=d_H(A,B)
\end{equation}
for any sets $A,B\subset\RR$ and their closures $\overline{A},\overline{B}$; this property remains valid in every metric space.

In order to state our theorem we need the sets $\uu$ and $\vv$ introduced in \cite{EJK1990} and \cite{KL2007}, respectively.
We denote by $\uu$ the set of bases in which $x=1$ has a unique expansion, and by $\vv$ the set of bases in which $x=1$ has a unique \emph{doubly infinite} expansion.
Here a sequence $(c_i)$ is called \emph{infinite} if it has infinitely many nonzero digit, and \emph{doubly infinite} if it is infinite, and  its \emph{reflection} $\overline{(c_i)}:=(M-c_i)$ is also infinite.
We recall that
\begin{equation*}
\uu\subsetneqq\uuu\subsetneqq\vv=\vvv,
\end{equation*}
and that both sets $\vv\setminus\uuu$ and $\uuu\setminus\uu$ are countably infinite.
If $M=1$, then the smallest element of $\vv\setminus\uuu$ is the Golden Ratio ($\approx 1.61803$) \cite{KL2007} and the smallest element of $\uu$ is the Komornik--Loreti constant ($\approx 1.78723$) \cite{KL1998}.
An example of an element of $\uuu\setminus\uu$ is the \emph{Tribonacci number} ($\approx 1.83929$), i.e., the positive root of the equation $q^3=q^2+q+1$.
The purpose of this paper is to prove the following theorem:

\begin{theorem}\label{t1}\mbox{}
\begin{enumerate}[\upshape (i)]
\item The functions $q\mapsto \uu_q'$ and $q\mapsto \uu_q$ are left continuous.
\item The functions $q\mapsto \vv_q'$ and $q\mapsto \vv_q$ are right continuous.
\item Each of the four functions is continuous at $q\in(1,\infty)$ if and only if $q\notin\vv\setminus\uuu$.
\end{enumerate}
\end{theorem}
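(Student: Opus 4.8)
The plan is to translate everything into the symbolic dynamics of base‑$q$ expansions and to use the monotonicity and one‑sided continuity of the two canonical expansions of $1$. Write $a(q)$ for the quasi‑greedy expansion of $1$ in base $q$ and $b(q)$ for the greedy one; recall that $b(q)=\lim_{p\downarrow q}a(p)$, that $q\mapsto a(q)$ is strictly increasing and left continuous while $q\mapsto b(q)$ is increasing and right continuous, that $a(q)\preceq b(q)$ with equality exactly when $b(q)$ is infinite, and the lexicographic characterizations: $(c_i)\in\uu_q'$ iff $c_{n+1}c_{n+2}\cdots\prec a(q)$ whenever $c_n<M$ and $\overline{c_{n+1}c_{n+2}\cdots}\prec a(q)$ whenever $c_n>0$, while $\overline{\uu_q'}$ and $\vv_q'$ admit the analogous descriptions with $\prec$ replaced by $\preceq$ and with $a(q)$, respectively $b(q)$, in the corresponding role. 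In particular $p<q$ gives $\uu_p'\subset\uu_q'$, $\vv_p'\subset\vv_q'$, and even $\vv_p'\subset\uu_q'$ (a $\vv_p'$‑sequence has all relevant shifts $\preceq b(p)\preceq a(r)\prec a(q)$ for any $r\in(p,q)$). Finally, thanks to \eqref{1}, to the compactness of the sequence space, and to the fact that $(q,(c_i))\mapsto(c_i)_q$ is continuous and Lipschitz in $(c_i)$ locally uniformly in $q$, each Hausdorff‑convergence statement about $q\mapsto\uu_q'$ or $q\mapsto\vv_q'$ transfers to $q\mapsto\uu_q$ or $q\mapsto\vv_q$; so I would work with the sequence‑valued maps.

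For (i), since $\uu_p'\subset\uu_q'$ for $p<q$, left continuity at $q$ is equivalent to $\overline{\bigcup_{p<q}\uu_p'}=\overline{\uu_q'}$, i.e.\ to the assertion that every finite prefix of a sequence of $\uu_q'$ is also a prefix of a sequence of $\uu_p'$ for some $p<q$; since only finitely many prefixes of a given length occur, one may treat them one at a time. Given such a prefix $w$, arising from $(c_i)\in\uu_q'$, the natural construction cuts $(c_i)$ at a large index $m\ge\abs w$ and appends a suitable tail $t$: when $q\in\uuu$ one can take $t=a(p)$ with $p\in\uu$ chosen so that $a(p)$ agrees with $a(q)$ on a long initial segment (possible because $\uuu$ is perfect and $\uuu\setminus\uu$ is countable, so every base of $\uuu$ other than $\min\uuu$ is a left limit of $\uu$), and then $a(p)\in\uu_p'$ together with the admissibility relations $\sigma^k a(p)\preceq a(p)$ and the lexicographic test shows $wt\in\uu_p'$; the remaining $q$ — in particular $\min\uuu$ and the bases outside $\uuu$ — are handled by reducing, via the de Vries--Komornik description of how $\uu_q'$ varies (it is locally constant between consecutive points of $\vv\setminus\uuu$), to the previous case. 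I expect this tail construction to be the main obstacle: the cut index $m$ must be picked so that at each position inside $w$ the relevant shift of $wt$ is already \emph{strictly} dominated within $w$, and one must verify that passing from $a(q)$ to the slightly smaller $a(p)$ creates no shift of $wt$ landing in the interval $[a(p),a(q))$. Left continuity of $q\mapsto\uu_q$ then follows from that of $q\mapsto\uu_q'$ by the reduction above.

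For (ii), since $\vv_p'\subset\vv_q'$ for $p<q$, right continuity at $q$ amounts to $\bigcap_{p>q}\vv_p'=\vv_q'$ (the Hausdorff limit of a decreasing family of compact sets is its intersection). By the lexicographic characterization $\bigcap_{p>q}\vv_p'$ is the set of sequences all of whose relevant shifts are $\preceq\inf_{p>q}b(p)=\lim_{p\downarrow q}b(p)=b(q)$, using the right continuity of $b$; this is exactly $\vv_q'$. No delicate construction is needed, and right continuity of $q\mapsto\vv_q$ follows as before.

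For (iii), combine (i) and (ii): each of the four maps is left continuous, hence continuous at $q$ iff right continuous there, and each is right continuous, hence continuous at $q$ iff left continuous there; so it is enough to compute the one‑sided limits. By (i) and monotonicity, $\lim_{p\uparrow q}\uu_p'=\overline{\bigcup_{p<q}\uu_p'}=\overline{\uu_q'}$, and since $\uu_p'\subset\vv_p'\subset\uu_q'$ for $p<q$ also $\lim_{p\uparrow q}\vv_p'=\overline{\bigcup_{p<q}\vv_p'}=\overline{\uu_q'}$. On the other side $\lim_{p\downarrow q}\uu_p'=\bigcap_{p>q}\overline{\uu_p'}$, which by the characterization is the set of sequences with relevant shifts $\preceq\inf_{p>q}a(p)=b(q)$, i.e.\ $\vv_q'$, while $\lim_{p\downarrow q}\vv_p'=\vv_q'$ by (ii). Hence at each $q$ all four maps have left limit $\overline{\uu_q'}$ (respectively $\overline{\uu_q}$) and right limit $\vv_q'$ (respectively $\vv_q$), so each is continuous at $q$ precisely when $\overline{\uu_q'}=\vv_q'$, equivalently $\overline{\uu_q}=\vv_q$. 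I would finish by quoting the result of de Vries--Komornik \cite{DK2009, DKL2016} that $\vv_q\setminus\overline{\uu_q}$ (equivalently $\vv_q'\setminus\overline{\uu_q'}$) is nonempty exactly when $q\in\vv\setminus\uuu$, which is the stated criterion.
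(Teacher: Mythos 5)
Your overall architecture is the same as the paper's: stabilize finite prefixes to compute the one-sided limits of the monotone families $\uu_p'$ and $\vv_p'$, transfer to $\uu_q,\vv_q$ via the continuity of $(c_i)\mapsto(c_i)_q$ and \eqref{1}, and reduce (iii) to the identity $\overline{\uu_q'}=\overline{\vv_q'}$, which fails exactly on $\vv\setminus\uuu$ by de Vries--Komornik. However, there is a genuine gap in your argument for (i), at the central construction. You justify your choice of approximating base by the claim that every base of $\uuu$ other than $\min\uuu$ is a left limit of $\uu$, deducing this from the perfectness of $\uuu$ and the countability of $\uuu\setminus\uu$. Perfectness gives that every point of $\uuu$ is an accumulation point, not a \emph{left} accumulation point: $\uuu=\overline{\uu}$ is a Cantor set, so $(1,\infty)\setminus\uuu$ has infinitely many bounded components $(q_0,q_0^*)$, and each right endpoint $q_0^*$ lies in $\uuu$ while $(q_0,q_0^*)\cap\uuu=\varnothing$, so $q_0^*$ is not a left limit of $\uu$ (for $M=1$ the component whose left endpoint is the Tribonacci number already furnishes such a $q_0^*\neq\min\uuu$). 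At such $q=q_0^*$ neither branch of your plan applies: there is no $p\in\uu$ with $p<q$ and $a(p)$ sharing a long prefix with $a(q)$, and the fallback via local constancy of $p\mapsto\uu_p'$ also fails, because the points of $\vv\setminus\uuu$ inside the gap $(q_0,q_0^*)$ accumulate at $q_0^*$ from the left, so $\uu_p'$ is not eventually constant as $p\uparrow q$. The paper's remedy is to approximate every $q\in\uuu$ from the left by bases $t_n\in\vv\setminus\uuu$ with $\alpha_1(t_n)\cdots\alpha_n(t_n)=\alpha_1(q)\cdots\alpha_n(q)$ (this is \cite[Lemma 3.12 and Remark 3.13]{DKL2016}; such $t_n$ exist precisely because the points of $\vv$ inside each gap accumulate at its right endpoint), to append the tail $\alpha(t_n)$ rather than a tail coming from $\uu$, and then to verify membership in $\uu_{p_n}'$ for an intermediate base $p_n\in(t_n,q)$, using $t_n\in\vv$ to obtain $\sigma^k\alpha(t_n)\preceq\alpha(t_n)\prec\alpha(p_n)$.

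A secondary inaccuracy: the lexicographic characterization of $\vv_q'$ uses the quasi-greedy expansion $\alpha(q)$ with non-strict inequalities, not the greedy expansion $b(q)$ as you state, and $\overline{\uu_q'}$ admits such a description only for $q\in\uuu$ (for $q\in\vv\setminus\uuu$ it is a proper closed subset of $\vv_q'$). Consequently your computation of $\bigcap_{p>q}\vv_p'$ in (ii) needs an extra step: the intersection consists of the sequences whose relevant shifts are $\preceq\alpha(p)$ for every $p>q$, and while $\alpha(p)\to\beta(q)$ pointwise as $p\downarrow q$, one must still rule out sequences with a shift lying lexicographically between $\alpha(q)$ and $\beta(q)$ when these differ. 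The paper sidesteps this by proving the right continuity of $q\mapsto\vv_q'$ with a prefix-stabilization argument symmetric to (i) (approximating from the right by bases $r_n\in\uuu\setminus\uu$) and then obtaining the remaining one-sided limits from the sandwich $\vv_p'\subset\uu_r'\subset\vv_s'$ for $p<r<s$; I would recommend adopting that route, or at least supplying the missing lexicographic argument.
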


We start by studying the maps $q\mapsto \uu_q'$ and $q\mapsto \vv_q'$.

\begin{lemma}\label{l2}
The function $q\mapsto\uu_q'$ is left continuous.
\end{lemma}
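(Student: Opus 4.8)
The plan is to exploit the monotonicity $\uu_p'\subset\uu_q'$ for $p<q$ to reduce left continuity to a density statement, and then to prove that density from the lexicographic description of $\uu_q'$.

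Fix $q\in(1,\infty)$. Since $\uu_p'\subset\uu_q'$ for $1<p<q$, one of the two inclusions defining $d_H$ is automatic, so $d_H(\uu_p',\uu_q')=\sup_{c\in\uu_q'}\rho(c,\overline{\uu_p'})$. As $\set{0,1,\ldots,M}^{\NN}$ is compact and the closed sets $\overline{\uu_p'}$ increase with $p$, a routine compactness argument reduces left continuity at $q$ to the assertion that $\bigcup_{1<p<q}\uu_p'$ is dense in $\uu_q'$: if it were not, one could choose $\eps>0$, $p_j\uparrow q$ and $c_j\in\uu_q'$ with $\rho(c_j,\overline{\uu_{p_j}'})\ge\eps$; passing to a subsequence $c_j\to c\in\overline{\uu_q'}$ and noting that $\overline{\uu_p'}\subset\overline{\uu_{p_j}'}$ once $p_j>p$, we would get $\rho(c,\overline{\uu_p'})\ge\eps$ for every $p<q$, i.e.\ $c\notin\overline{\bigcup_{p<q}\uu_p'}$, contradicting density.

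So it remains to show that $\bigcup_{1<p<q}\uu_p'$ is dense in $\uu_q'$. Here I would use the characterization of $\uu_q'$ via the quasi-greedy expansion $\alpha(q)=(\alpha_i(q))$ of $1$ in base $q$ — namely $(c_i)\in\uu_q'$ iff $\sigma^k(c)\prec\alpha(q)$ whenever $c_k<M$ and $\overline{\sigma^k(c)}\prec\alpha(q)$ whenever $c_k>0$ — together with the fact that $q\mapsto\alpha(q)$ is strictly increasing and left continuous. The key observation is that $\bigcup_{1<p<q}\uu_p'$ is exactly the set of $(c_i)\in\uu_q'$ whose defining inequalities are \emph{uniform}, in the sense that there is an $m$ (depending on $(c_i)$) such that every one of them is already decided within the first $m$ coordinates. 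Indeed, if $(c_i)\in\uu_p'$ with $p<q$, then $\sigma^k(c)\prec\alpha(p)$ and, since $\alpha(p)$ and $\alpha(q)$ first differ at some fixed position $m_0$, the first discrepancy between $\sigma^k(c)$ and $\alpha(q)$ lies at a position $\le m_0$ (and similarly for the reflected inequalities); conversely, given a uniform $(c_i)$ with bound $m$, left continuity lets us pick $p<q$ with $\alpha_i(p)=\alpha_i(q)$ for $i\le m$, and then the same inequalities hold against $\alpha(p)$, so $(c_i)\in\uu_p'$. Hence the lemma reduces to showing that the uniform sequences are dense in $\uu_q'$.

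Finally, to produce, for a given $(c_i)\in\uu_q'$ and a given $n$, a uniform element of $\uu_q'$ sharing the first $n$ digits: if $q$ is so small that $\uu_q'=\set{0^\infty,M^\infty}$ there is nothing to do (then $\uu_p'=\uu_q'$ for $p<q$), so I may assume $\alpha(q)\succ\overline{\alpha(q)}$, which leaves room strictly between $\overline{\alpha(q)}$ and $\alpha(q)$. I would keep $c_1\cdots c_n$, copy the true digits $c_{n+1},c_{n+2},\dots$ a little further, up to a position $N$ at which the finitely many ``backward'' inequalities attached to positions $\le n$ have become strict within the copied block, and then splice on a tail $t$ (an explicit eventually periodic word, built from a long prefix of $\alpha(q)$) chosen so that all shifts of $t$ and of $\overline{t}$ stay below a fixed sequence $\prec\alpha(q)$, and so that the finitely many lexicographic conditions created at the splice hold — these can be arranged because $\alpha(q)\succ\overline{\alpha(q)}$ and because, by $(c_i)\in\uu_q'$, none of the relevant shifts of $\alpha(q)$ can equal $\overline{\alpha(q)}$. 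The sequence so obtained is a uniform element of $\uu_q'$ agreeing with $(c_i)$ on the first $n$ coordinates. I expect this last step — verifying that after an arbitrary legal prefix one can always splice on such a tail while controlling the finitely many lexicographic inequalities at the junction — to be the main technical obstacle.
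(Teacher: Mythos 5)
Your reduction is sound, and it is in substance the same first step as the paper's: reducing left continuity at $q$ to the statement that every $c\in\uu_q'$ agrees to arbitrary length with some element of some $\uu_p'$, $p<q$ (the paper phrases this as $B_n(\uu_{p_n}')=B_n(\uu_q')$ for suitable $p_n<q$; your compactness argument and your description of $\bigcup_{p<q}\uu_p'$ as the ``uniform'' sequences, via the left continuity of $q\mapsto\alpha(q)$, are correct). But the proof stops exactly where the real difficulty begins: the density of these uniform sequences in $\uu_q'$, i.e.\ the construction that, given $c\in\uu_q'$ and $n$, modifies $c$ beyond finitely many places into a member of some $\uu_p'$ with $p<q$. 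Your last paragraph is a sketch whose key assertions are not verified, as you acknowledge, and this step is not routine.

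Concretely, two things are missing. First, the tail you want to splice on --- an eventually periodic word starting with a long prefix of $\alpha(q)$ all of whose shifts and reflected shifts satisfy the required strict inequalities --- is essentially the quasi-greedy expansion $\alpha(t)$ of a base $t\in\vv$ with $t<q$ and $\alpha_1(t)\cdots\alpha_n(t)=\alpha_1(q)\cdots\alpha_n(q)$. When $q\in\uuu$ (the only hard case: for $q\notin\vv$ and for $q\in\vv\setminus\uuu$ one even has $\uu_p'=\uu_q'$ for suitable $p<q$), the existence of such bases $t_n\in\vv\setminus\uuu$ is a genuinely nontrivial fact, which the paper imports from \cite[Lemma 3.12 and Remark 3.13]{DKL2016}; Remark \ref{r4} of the paper shows that the natural explicit periodic candidate can fall on the wrong side of $q$, so ``this can be arranged because $\alpha(q)\succ\overline{\alpha(q)}$'' is not a proof. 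Second, splicing after an arbitrary position $N$ leaves uncontrolled junction conditions: for indices $n<k\le N$ whose lexicographic inequality in $c$ is decided only beyond $N$, the new sequence must still satisfy both $\sigma^k(d)\prec\alpha(p)$ and $\overline{\sigma^k(d)}\prec\alpha(p)$, and nothing in your sketch guarantees that one tail works after every such intermediate prefix. The paper sidesteps this by choosing the splice point canonically: it takes $p_n\in(t_n,q)$ and, for $c\in\uu_q'\setminus\uu_{p_n}'$, the \emph{first} index $j$ at which $c_{j+1}\cdots c_{j+n}$ equals $\alpha_1(q)\cdots\alpha_n(q)$ or its reflection, and sets $d:=c_1\cdots c_j\alpha(t_n)$; then the conditions at $k<j$ follow from the minimality of $j$ together with the prefix agreement \eqref{2}, while the conditions at $k\ge j$ reduce to $\overline{\alpha(t_n)}\le\sigma^{k-j}(\alpha(t_n))\le\alpha(t_n)$ (the $\vv$-property of $t_n$) combined with $\alpha(t_n)<\alpha(p_n)$. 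Until you either prove the existence of such a tail (or quote it) and organize the splice so that all post-junction conditions reduce to its shift-invariance property, the argument has a genuine gap.
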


\begin{proof}
For any set $\aa$ of sequences and any  integer $n\ge 1$ we denote by $B_n(\aa)$ the set of initial words of length $n$ of the sequences in $\aa$, i.e.,
\begin{equation*}
B_n(\aa):=
\set{c_1\cdots c_n\ :\ (c_i)\in\aa}.
\end{equation*}
For the left continuity of the map at some given $q\in(1,\infty)$, it suffices to establish the following property:
\emph{For each integer $n\ge 1$ there exists a $p_n<q$ such that $B_n(\uu_{p_n}')=B_n(\uu_q')$}.
If there exists a $p<q$ such that $\uu_p'=\uu_q'$, then we may obviously choose $p_n:=p$ for every $n$.

We distinguish three cases.
If  $q\in (1,\infty)\setminus\vv$, then, since $\vv$ is closed, $q$ has a neighborhood $(a,b)$ such that $\uu_p'=\uu_q'$ for all $p\in (a,b)$.
Indeed, we may choose the connected component of $(1,\infty)\setminus\vv$ that contains $q$.

If $q\in\vv\setminus\uuu$, then $\uu_p'=\uu_q'$ for all $p\in (a,q]$ with $a:=\max\set{r\in\vv\ :\ r<q}$ by \cite[Theorem 1.7]{DK2009}.

If $q\in\uuu$, then by the proof of \cite[Lemma 3.12]{DKL2016} and by Remark 3.13 following that proof there exists for each $n\ge 1$ a $t_n\in\vv\setminus\uuu$ satisfying
\begin{equation}\label{2}
t_n<q\qtq{and}\alpha_1(t_n)\cdots\alpha_n(t_n)=\alpha_1(q)\cdots\alpha_n(q).
\end{equation}
Choose $p_n\in(t_n,q)$ arbitrarily.
If $(c_i)\in\uu_q'\setminus\uu_{p_n}'$, then there exists a smallest integer $j\ge 0$ such that
\begin{align*}
&c_j<M\qtq{and}c_{j+1}\cdots c_{j+n}\ge\alpha_1(q)\cdots\alpha_n(q)
\intertext{or}
&c_j>0\qtq{and}c_{j+1}\cdots c_{j+n}\ge\overline{\alpha_1(q)\cdots\alpha_n(q)}.
\end{align*}
In fact,  we have equality here because $(c_i)\in\uu_q'$.
Assume by symmetry that $c_{j+1}\cdots c_{j+n}=\alpha_1(q)\cdots\alpha_n(q)$, and define $(d_i):=c_1\cdots c_j\alpha(t_n)$, then $d_1\cdots d_{j+n}=c_1\cdots c_{j+n}$ and hence $d_1\cdots d_n=c_1\cdots c_n$.
We complete the proof by showing that $(d_i)\in\uu_{p_n}'$.

We have to show that
\begin{align*}
&(d_{k+i})<\alpha(p_n)\qtq{whenever}d_k<M
\intertext{and}
&(d_{k+i})>\overline{\alpha(p_n)}\qtq{whenever}d_k>0.
\end{align*}
For $k<j$ this follows from the minimality of $j$ by \eqref{2}.

For $k\ge j$ this follows from the relation $p_n\in\vv$:
\begin{align*}
&(d_{k+i})=\alpha_{k-j+i}(t_n)\alpha_{k-j+i+1}(t_n)\cdots
\le \alpha(t_n)<\alpha(p_n)
\intertext{and}
&(d_{k+i})=\alpha_{k-j+i}(t_n)\alpha_{k-j+i+1}(t_n)\cdots
\ge \overline{\alpha(t_n)}>\overline{\alpha(p_n)}.\qedhere
\end{align*}
\end{proof}

\begin{lemma}\label{l3}
The function $q\mapsto\vv_q'$ is right continuous.
\end{lemma}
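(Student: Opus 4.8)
The plan is to bypass the truncation argument used for Lemma~\ref{l2} and to argue by compactness. Recall the lexicographic description: $(c_i)\in\vv_p'$ exactly when $(c_{k+i})\preceq\alpha(p)$ whenever $c_k<M$ and $(c_{k+i})\succeq\overline{\alpha(p)}$ whenever $c_k>0$. Each of these requirements cuts out a closed subset of $\set{0,\ldots,M}^\infty$, so $\vv_p'$ is closed, hence compact (if one takes $\vv_p'$ to consist of doubly-infinite expansions only, one works instead with its closure, which by \eqref{1} leaves all Hausdorff distances unchanged). Since $p\mapsto\alpha(p)$ is nondecreasing, $\vv_p'\subseteq\vv_{p'}'$ for $p\le p'$. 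The point to be proved is the identity $\bigcap_{p>q}\vv_p'=\vv_q'$; granting it, right continuity at $q$ is immediate, because $\vv_q'\subseteq\vv_p'$ makes $p\mapsto d_H(\vv_p',\vv_q')$ nonincreasing as $p\downarrow q$, and if it did not tend to $0$ one could pick $p_m\downarrow q$ and $x_m\in\vv_{p_m}'$ with $\rho(x_m,\vv_q')\ge\eps$, whence by compactness of $\vv_{p_1}'$ a subsequence of $(x_m)$ would converge to a point lying in every $\vv_p'$ with $p>q$, hence in $\vv_q'$ --- contradicting $\rho(x_m,\vv_q')\ge\eps$.

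It therefore remains to prove $\bigcap_{p>q}\vv_p'\subseteq\vv_q'$ (the reverse inclusion being trivial). Suppose $(c_i)$ lies in $\vv_p'$ for all $p>q$ but not in $\vv_q'$. As each $\vv_p'$ is invariant under the reflection $(c_i)\mapsto\overline{(c_i)}$, we may assume the failure is of the form: some index $k$ has $c_k<M$ and $(c_{k+i})\succ\alpha(q)$. Letting $p\downarrow q$ in $(c_{k+i})\preceq\alpha(p)$ gives $(c_{k+i})\preceq g$, where $g:=\inf_{p>q}\alpha(p)$ equals the greedy expansion of $1$ in base $q$. If $g$ is infinite then $g=\alpha(q)$, and $(c_{k+i})\preceq g=\alpha(q)$ contradicts $(c_{k+i})\succ\alpha(q)$. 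Otherwise $g=b_1\cdots b_n0^\infty$ with $b_n\ge1$, and then, by classical facts about quasi-greedy expansions (see \cite{DK2009}), $\alpha(q)=(b_1\cdots b_{n-1}(b_n-1))^\infty$.

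To rule out this case, set $w:=b_1\cdots b_{n-1}(b_n-1)$, so that $\alpha(q)=w^\infty$ and $g=b_1\cdots b_n0^\infty$. A brief lexicographic check shows that any sequence $t$ with $w^\infty\prec t\preceq g$ either equals $g$ or begins with the word $w$. Apply this with $t=(c_{k+i})$ (legitimate, since $c_k<M$ gives $(c_{k+i})\preceq g$): either $(c_{k+i})=g$ ends in $0^\infty$, or $(c_{k+i})$ begins with $w$, in which case $c_{k+n}=b_n-1<M$ and $w^\infty\prec(c_{k+n+i})\preceq g$ again; iterating, the process either stops with a block equal to $g$ --- so that $(c_{k+i})$, and hence $(c_i)$, ends in $0^\infty$ --- or never stops, forcing $(c_{k+i})=w^\infty=\alpha(q)$, which is excluded. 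Thus $(c_i)$ ends in $0^\infty$. Taking $j$ to be the largest index with $c_j>0$, the inequality $(c_{j+i})\succeq\overline{\alpha(p)}$ forced by $(c_i)\in\vv_p'$ becomes $0^\infty\succeq\overline{\alpha(p)}$ for all $p>q$, which is impossible because $\overline{\alpha(p)}$ has infinitely many positive terms for $1<p<M+1$. This establishes the identity.

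The only genuinely delicate step is the identity $\bigcap_{p>q}\vv_p'=\vv_q'$, specifically the case $g\succ\alpha(q)$, which arises exactly at the countably many bases $q$ --- among them every element of $\vv\setminus\uuu$ --- where the greedy expansion of $1$ is finite and $q\mapsto\alpha(q)$ jumps to the right. There the intersection is a priori governed by the larger sequence $g$, and the substance of the argument is that any sequence this would additionally permit must terminate in $0^\infty$ (or, in the reflected case, in $M^\infty$) and is hence excluded by the \emph{other} half of the defining inequalities. Everything else is soft.
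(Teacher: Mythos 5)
Your argument is correct, and it takes a genuinely different route from the paper's. The paper reduces right continuity to finding, for each $n$, a base $t_n>q$ with $B_n(\vv_{t_n}')=B_n(\vv_q')$, and then argues by cases according to the position of $q$ relative to $\uu$, $\uuu$ and $\vv$; the hardest case ($q\in\uu$) is handled by producing bases $r_n\in\uuu\setminus\uu$ to the right of $q$ whose quasi-greedy expansions agree with $\alpha(q)$ to length $n$, which requires the density of $\uuu\setminus\uu$ in $\uuu$ and a truncation-and-surgery argument on sequences. You instead prove the single identity $\bigcap_{p>q}\vv_p'=\vv_q'$ and convert it into Hausdorff convergence by compactness of $\set{0,\ldots,M}^{\NN}$; the only input beyond the lexicographic characterization is the classical fact $\inf_{p>q}\alpha(p)=\beta(q)$ (which the paper also invokes, via \cite{DK2011}) together with the relation $\alpha(q)=(b_1\cdots b_{n-1}(b_n-1))^\infty$ when $\beta(q)=b_1\cdots b_n0^\infty$ is finite. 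This buys uniformity --- no case analysis on whether $q$ lies in $\uu$, $\uuu\setminus\uu$, $\vv\setminus\uuu$ or outside $\vv$, and no appeal to the topological structure theory of these sets --- at the price of the lexicographic iteration showing that any sequence admitted by $\beta(q)$ but not by $\alpha(q)$ must terminate in $0^\infty$ (or $M^\infty$) and is therefore excluded by the opposite inequality; I checked that iteration and it is sound, including the step $c_{k+n}=b_n-1<M$ that lets you pass to the next block. Two small points deserve a sentence each in a final write-up: the degenerate range $q\ge M+1$ (where your final contradiction needs some $p\in(q,M+1)$) should be dispatched separately by noting that $\vv_t'$ is constant for $t\ge M+1$; and if one adopts the convention that $\vv_q'$ omits some non-doubly-infinite sequences, one should note that its closure is exactly the set cut out by the weak inequalities, so that your identity transfers to the closures and, by \eqref{1}, to the Hausdorff distances.
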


\begin{proof}
Using the notations of the preceding proof, for the right continuity of the map at some given $q\in(1,\infty)$ it suffices to establish the following property:
\emph{For each integer $n\ge 1$ there exists a $t_n>q$ such that $B_n(\vv_{t_n}')=B_n(\vv_q')$}.
If there exists a $t>q$ such that $\vv_t'=\vv_q'$, then we may obviously choose $t_n:=t$ for every $n$.

We distinguish four cases.
The case $q=M+1$ is obvious because if $t\ge M+1$, then $\vv_t'$ is the set of all infinite sequences and therefore $B_n(\vv_t')$ contains all words of length $n$.

If  $q\in (1,\infty)\setminus\vv$, then, as in the proof of Lemma \ref{l2}, $q$ has a neighborhood $(a,b)$ such that $(a,b)\cap\vv=\varnothing$, and therefore $\vv_t'=\uu_t'=\uu_q'=\vv_q'$ for all $t\in (a,b)$.

If $q\in\vv\setminus\uu$, then $\vv_t'=\vv_q'$ for all $t\in [q,b)$ with $b:=\min\set{p\in \vv\ :\ p>q}$.

Finally, let $q\in\uu$ and $q<M+1$.
Assume for the moment that there exists an $r_n\in \uuu\setminus\uu$ such that\begin{equation}\label{3}
r_n>q
\qtq{and}
\alpha_1(r_n)\cdots\alpha_n(r_n)=\alpha_1(q)\cdots\alpha_n(q).
\end{equation}
Given $(c_i)\in\uu_{r_n}'\setminus\uu_q'$ arbitrarily, we consider the smallest integer $j\ge 0$ such that
\begin{align*}
&c_j<M\qtq{and}c_{j+1}\cdots c_{j+n}\ge\alpha_1(r_n)\cdots\alpha_n(r_n)
\intertext{or}
&c_j>0\qtq{and}c_{j+1}\cdots c_{j+n}\ge\overline{\alpha_1(r_n)\cdots\alpha_n(r_n)}.
\end{align*}
In fact, then we have equality because $(c_i)\in\uu_{r_n}'$.

Assume by symmetry that
\begin{equation*}
c_{j+1}\cdots c_{j+n}=\alpha_1(r_n)\cdots\alpha_n(r_n),
\end{equation*}
and define $(d_i):=c_1\cdots c_j\alpha(q)$.
Then $d_1\cdots d_{j+n}=c_1\cdots c_{j+n}$ and hence $d_1\cdots d_n=c_1\cdots c_n$.
We complete the proof by showing that $(d_i)\in\uu_{r_n}'$.

We have to show that
\begin{align*}
&(d_{k+i})<\alpha(r_n)\qtq{whenever}d_k<M
\intertext{and}
&(d_{k+i})>\overline{\alpha(r_n)}\qtq{whenever}d_k>0.
\end{align*}
For $k<j$ this follows from the minimality of $j$ by \eqref{3}.

For $k\ge j$ this follows from the relation $q\in\vv$:
\begin{align*}
&(d_{k+i})=\alpha_{k-j+1}(q)\alpha_{k-j+2}(q)\cdots
\le \alpha(q)<\alpha(r_n)
\intertext{and}
&(d_{k+i})=\alpha_{k-j+1}(q)\alpha_{k-j+2}(q)\cdots
\ge \overline{\alpha(q)}>\overline{\alpha(r_n)}.
\end{align*}
It remains to prove the existence of $r_n$.
First we show that $q$ is a right accumulation point of $\uuu$.
For otherwise there exists a $\delta>0$ such that $(q,q+\delta)\cap\uuu=\varnothing$.
This means that $(q,q+\delta)\subset(q_0,q_0^*)$ for some connected component $(q_0,q_0^*)$ of $(1,\infty)\setminus\uuu$, and hence $q=q_0=1$ or $q=q_0\in\uuu\setminus\uu$, contradicting our assumption that $q\in\uu$.
Since $\uuu\setminus\uu$ is dense in $\uuu$, it follows that $q$ is also a right accumulation point of $\uuu\setminus\uu$.
There exists therefore a sequence $(s_k)\subset\uuu\setminus\uu$ satisfying $s_k\searrow q$.
Then $\alpha(s_k)$ converges pointwise to $\beta(q)$ (see \cite{DK2011}), and $\beta(q)=\alpha(q)$ because $q\in\uu$.
Therefore, for every fixed $n\ge 1$ there exists a sufficiently large $k$ such that $\alpha(s_k)$ and $\alpha(q)$ have the same initial word of length $n$.
\end{proof}

\begin{remark}\label{r4}
It would have been tempting to construct a suitable $r_n$ satisfying \eqref{3} as in the proof of \cite[Lemma 3.10]{DKL2016}, by writing
$\alpha(q)=(\alpha_i)$ and taking
\begin{equation*}
\alpha(r_n):=\left( \alpha_1\cdots\alpha_m\ \overline{\alpha_1\cdots\alpha_{m-1}(\alpha_m-1)\alpha_1\cdots\alpha_m}\right) ^{\infty}
\end{equation*}
with a sufficiently large $m$ satisfying $\alpha_m>0$.
However, we do not get a point satisfying
$r_n>q$ in general.
For example, if $M=1$ and $(\alpha_i)=1(10)^{\infty}$, then $\alpha(r_n)<(\alpha_i)$ for every $m\ge 2$ satisfying $\alpha_m=1$ by a simple computation because they start with the same word of length $m$, but
\begin{equation*}
\alpha_{m+1}(r_n)\alpha_{m+2}(r_n)
=00<01=\alpha_{m+1}\alpha_{m+2}.
\end{equation*}
\end{remark}

\begin{corollary}\label{c5}\mbox{}
\begin{enumerate}[\upshape (i)]
\item If $q_n\nearrow q$, then $\uu_{q_n}'\to\uu_q'$ and $\vv_{q_n}'\to\uu_q'$.
\medskip
\item If $q_n\searrow q$, then $\uu_{q_n}'\to\vv_q'$ and $\vv_{q_n}'\to\vv_q'$.
\end{enumerate}
\end{corollary}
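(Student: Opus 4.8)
The plan is to deduce the corollary from Lemmas~\ref{l2} and~\ref{l3} together with the monotonicity of the four families of sets. The key ingredient is the refinement
\begin{equation*}
\uu_p'\subseteq\vv_p'\subseteq\uu_q'\subseteq\vv_q'\qquad\text{whenever }p<q
\end{equation*}
of the elementary relation $\uu_p'\subseteq\uu_q'$. The two outer inclusions hold at a fixed base and are immediate, the strict lexicographic inequalities defining $\uu_r'$ implying the non-strict ones defining $\vv_r'$; the middle inclusion is the one with content: since the quasi-greedy expansion $\alpha(\cdot)$ of $1$ is strictly increasing, a sequence meeting the non-strict admissibility conditions at base $p$ meets the strict ones at base $q$, hence lies in $\uu_q'$. (Should one's convention exclude $0^\infty$ and $M^\infty$ from $\vv_r'$, the whole argument runs verbatim with the initial-word sets $B_n$ of the preceding proofs in place of the sets themselves, because $0^n$ and $M^n$ occur as initial words of $0^n\alpha(r)$ and of its reflection, which lie in $\vv_r'$ for $r\in\vv$, while $\vv_r'=\uu_r'$ for $r\notin\vv$.)

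Granting this, the first assertion of (i) and the last assertion of (ii) are exactly Lemmas~\ref{l2} and~\ref{l3}: left continuity of $q\mapsto\uu_q'$ gives $\uu_{q_n}'\to\uu_q'$ when $q_n\nearrow q$, and right continuity of $q\mapsto\vv_q'$ gives $\vv_{q_n}'\to\vv_q'$ when $q_n\searrow q$. For the two remaining assertions I would use the elementary fact that, in any metric space, $A\subseteq B\subseteq C$ implies both $d_H(A,B)\le d_H(A,C)$ and $d_H(B,C)\le d_H(A,C)$ (one term in the definition of the left-hand distance vanishes, the other being dominated by the corresponding term of $d_H(A,C)$). When $q_n\nearrow q$, the inclusion chain gives $\uu_{q_n}'\subseteq\vv_{q_n}'\subseteq\uu_q'$, whence
\begin{equation*}
d_H(\vv_{q_n}',\uu_q')\le d_H(\uu_{q_n}',\uu_q')\longrightarrow 0
\end{equation*}
by Lemma~\ref{l2}, so $\vv_{q_n}'\to\uu_q'$. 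When $q_n\searrow q$, the chain gives $\vv_q'\subseteq\uu_{q_n}'\subseteq\vv_{q_n}'$, whence
\begin{equation*}
d_H(\uu_{q_n}',\vv_q')\le d_H(\vv_{q_n}',\vv_q')\longrightarrow 0
\end{equation*}
by Lemma~\ref{l3}, so $\uu_{q_n}'\to\vv_q'$.

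The bulk of the work is already contained in Lemmas~\ref{l2} and~\ref{l3}, so I expect no serious difficulty; the main point requiring attention is the inclusion chain above — in particular its middle link $\vv_p'\subseteq\uu_q'$, which rests on the strict monotonicity of $\alpha$ and the lexicographic descriptions of $\uu_q'$ and $\vv_q'$ — together with the degenerate range $q\ge M+1$, where the relevant sets are (eventually) constant in $q$ and the claims are immediate.
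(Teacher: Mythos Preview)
Your argument is correct and coincides with the paper's: the two ``direct'' limits are Lemmas~\ref{l2} and~\ref{l3}, and the remaining two follow by sandwiching via the monotone chain $\uu_p'\subseteq\vv_p'\subseteq\uu_q'\subseteq\vv_q'$ for $p<q$ (the paper phrases this as $p<r<s\Rightarrow\uu_p'\subset\vv_r'\subset\uu_s'$ and $\vv_p'\subset\uu_r'\subset\vv_s'$, which is the same content). Your explicit justification of the middle inclusion and of the Hausdorff-distance squeeze just makes the same step more transparent.
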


\begin{proof}
The  relations $\uu_{q_n}'\to\uu_q'$ and $\uu_{q_n}'\to\vv_q'$ are equivalent to Lemmas \ref{l2} and \ref{l3}.
The two other relations hence follow because
\begin{equation*}
p<r<s\Longrightarrow \uu_p'\subset\vv_r'\subset\uu_s'
\qtq{and}
\vv_p'\subset\uu_r'\subset\vv_s'
\end{equation*}
by the lexicographic characterizations of these sets \cite{DK2009}.
\end{proof}

Corollary \ref{c5} remains valid for $\uu_q$ and $\vv_q$ by the following result:

\begin{lemma}\label{l6}
For each fixed $q>1$, the function $(c_i)\mapsto (c_i)_q$ is (uniformly) continuous from $\set{0,1,\ldots,M}^{\NN}$ to $\RR$.
\end{lemma}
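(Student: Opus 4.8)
The plan is to exhibit an explicit modulus of continuity coming from the geometric decay of the weights $q^{-i}$, which makes the statement essentially elementary. First I would recall how the metric $\rho$ encodes coordinate agreement: by definition $\rho((c_i),(d_i))=2^{-m}$, where $m$ is the first index at which the two sequences differ, so the condition $\rho((c_i),(d_i))\le 2^{-n}$ forces $m>n$, i.e.\ $c_i=d_i$ for every $i\le n$.

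Given two sequences $(c_i),(d_i)\in\set{0,1,\ldots,M}^{\NN}$ with $\rho((c_i),(d_i))\le 2^{-n}$, I would then estimate directly
\begin{equation*}
\abs{(c_i)_q-(d_i)_q}
=\abs{\sum_{i=n+1}^{\infty}\frac{c_i-d_i}{q^i}}
\le\sum_{i=n+1}^{\infty}\frac{M}{q^i}
=\frac{M}{q^n(q-1)},
\end{equation*}
using that $\abs{c_i-d_i}\le M$ for all $i$ together with the value $\sum_{i>n}q^{-i}=q^{-n}/(q-1)$ of the geometric tail (here $q>1$ is essential). This already yields continuity, since the right-hand side tends to $0$ as $n\to\infty$.

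To obtain uniform continuity, given $\eps>0$ I would choose $n$ so large that $M/(q^n(q-1))<\eps$ and set $\delta:=2^{-n}$. Then $\rho((c_i),(d_i))<\delta$ implies that the two sequences share their first $n$ digits, whence $\abs{(c_i)_q-(d_i)_q}<\eps$ by the displayed bound; crucially $\delta$ depends only on $\eps$ (and on the fixed data $q$ and $M$), not on the particular pair of sequences. There is no genuine obstacle in this argument: the only two points requiring a moment's care are the direction of the inequality relating $\rho$ to the index of first disagreement, and the exact summation of the geometric tail, both of which are routine.
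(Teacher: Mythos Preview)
Your argument is correct and essentially identical to the paper's: both bound $\abs{(c_i)_q-(d_i)_q}$ by the geometric tail $M/(q^n(q-1))$ once the first $n$ digits agree, and then choose $\delta$ accordingly. One tiny slip: the hypothesis $\rho\le 2^{-n}$ only gives $m\ge n$, not $m>n$; your displayed estimate and the final $\eps$--$\delta$ step are nonetheless correct because there you (and the paper) use the strict inequality $\rho<2^{-n}$.
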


\begin{proof}
Consider two sequences $c=(c_i)$ and $d=(d_i)$.
If $\rho((c_i),(d_i))<2^{-m}$, then $c_1\cdots c_m=d_1\cdots d_m$, and the following estimate holds:
\begin{equation*}
\abs{(c_i)_q-(d_i)_q}
=\abs{\sum_{i=1}^{\infty}\frac{c_i}{q^i}-\sum_{i=1}^{\infty}\frac{d_i}{q^i}}
=\abs{\sum_{i=m+1}^{\infty}\frac{c_i-d_i}{q^i}}
\le \frac{M}{q^m(q-1)}.
\end{equation*}

Now for any given $\eps>0$, choose $m\ge 1$ such that $\frac{M}{q^m(q-1)}<\eps$, and then a $\delta>0$ such that $\delta<2^{-m}$.
Then
\begin{equation*}
\rho((c_i),(d_i))<\delta\Longrightarrow \abs{(c_i)_q-(d_i)_q}<\eps.\qedhere.
\end{equation*}
\end{proof}

Now we are ready to prove our theorem.

\begin{proof}[Proof of Theorem \ref{t1}]
In view of the general property \eqref{1} of the Hausdorff distance, Corollary \ref{c5}  implies the following:
\begin{itemize}
\item the map $q\mapsto \uu_q'$ is left continuous, and it is right continuous at $q$ if and only if $\overline{\uu_q'}=\overline{\vv_q'}$;
\item the map $q\mapsto \vv_q'$ is right continuous, and it is left continuous at $q$ if and only if $\overline{\uu_q'}=\overline{\vv_q'}$.
\end{itemize}
Combining Corollary \ref{c5} and Lemma \ref{l6} we obtain the analogous results for the other two maps:
\begin{itemize}
\item the map $q\mapsto \uu_q$ is left continuous, and it is right continuous at $q$ if and only if $\overline{\uu_q}=\overline{\vv_q}$;
\item the map $q\mapsto \vv_q$ is right continuous, and it is left continuous at $q$ if and only if $\overline{\uu_q}=\overline{\vv_q}$.
\end{itemize}
We complete the proof of the theorem by proving the equivalence of the following three properties:
\begin{equation*}
\overline{\uu_q'}=\overline{\vv_q'},
\quad \overline{\uu_q}=\overline{\vv_q}
\qtq{and}q\notin\vv\setminus\uuu.
\end{equation*}
We distinguish three cases.

If $q\notin\vv$, then $\uu_q'=\vv_q'$ by \cite[Theorem 1.5]{DK2009}, and hence also  $\uu_q=\vv_q$, so that $\overline{\uu_q'}=\overline{\vv_q'}$ and $\overline{\uu_q}=\overline{\vv_q}$.

If $q\in\uuu$, then $\overline{\uu_q}=\vv_q$ by \cite[Theorem 1.3 (i)]{DK2009} and therefore $\overline{\uu_q}=\overline{\vv_q}$.
Moreover, the \emph{proof} of \cite[Theorem 1.3 (i)]{DK2009} shows the stronger property $\overline{\uu_q'}=\vv_q'$, so that we have also $\overline{\uu_q'}=\overline{\vv_q'}$.

Finally, if $q\in\vv\setminus\uuu$, then $\uu_q$ and $\vv_q$ are different closed sets by \cite[Theorem 1.4 (i), (ii)]{DK2009}, and then $\uu_q'$ and $\vv_q'$ are also different closed sets by Lemma \ref{l6}, so that $\overline{\uu_q'}\ne\overline{\vv_q'}$ and $\overline{\uu_q}\ne\overline{\vv_q}$.
\end{proof}

\end{document}